\newtheorem*{thm}{Theorem}
\newtheorem{fact}{Fact}
\begin{document}
\allowdisplaybreaks[4]

\title{A note on the Triple Product Property subgroup capacity of finite groups}
\author{Ivo Hedtke}
\address{Institute of Computer Science, University of Halle-Wittenberg, D-06099 Halle, Germany}
\email{hedtke@informatik.uni-halle.de}
\begin{abstract}
In the context of group-theoretic fast matrix multiplication the TPP capacity is used to bound the exponent $\omega$ of matrix multiplication. We prove a new and sharper upper bound
for the TPP subgroup capacity of a finite group.
\end{abstract}
\maketitle

\section{Introduction}\thispagestyle{empty}

\noindent In the context of group-theoretic fast matrix multiplication (see \cite{Cohn2003} for an introduction) the \emph{TPP subgroup capacity} is used to bound the exponent $\omega$ of matrix multiplication. New upper bounds for the TPP subgroup capacity can be used to identify groups that do not lead to a nontrivial upper bound for $\omega$ via subgroup \emph{TPP triples}.
Our new bound \eqref{eq:NewBound} also gives a hint why \textsc{Cohn} and \textsc{Umans} state that \enquote{nonabelian simple groups appear to be a fruitful source of groups} with a large TPP capacity: The TPP capacity of abelian groups is trivial and the normal core of simple groups is equal to $1$.
Some researchers believe that triples of subgroups will never lead to a new nontrivial upper bound for $\omega$ in the context of the (TPP). Maybe this new bound will help to prove or disprove this conjecture.

With $Q(X):=\{xy^{-1}:x,y\in X\}$ we denote the \emph{right quotient} of $X$. A triple $(S,T,U)$ of subsets $S,T,U\subseteq G$ of a group $G$ fulfills the so-called \emph{Triple Product Property} (TPP) if for $s\in Q(S)$, $t\in Q(T)$ and $u \in Q(U)$, $stu=1$ holds iff $s=t=u=1$.
In this case we call $(S,T,U)$ a \emph{TPP triple} of $G$. The \emph{TPP capacity}
$\beta(G) := \max \big\{|S|\cdot|T|\cdot |U| : (S,T,U) \text{ is a TPP triple of }G\big\}$
is the biggest size of a TPP triple of $G$. We can use
\begin{gather}\label{omega}
\beta(G)^{\omega/3}\leq D_\omega(G)
\end{gather}
to find new nontrivial upper bound for the exponent
$
\omega := \inf \{r \in \mathbb R  :  M(n)=\mathcal O(n^r)\}
$
of matrix multiplication, where $D_\omega(G) := \sum d_i^\omega$ and $\{d_i\}$ are the character degrees of $G$. Here $M(n)$ denotes the number of field operations in characteristic $0$ required to multiply two $(n\times n)$ matrices. The \emph{TPP subgroup capacity} $\beta_\mathrm{g}$ is defined like $\beta$, but we restrict $S$, $T$ and $U$ to be subgroups of $G$. Note that $\beta_\mathrm{g} \leq \beta$ holds. Therefore, $\beta_\mathrm{g}$ can be used in the same way like $\beta$ to bound $\omega$, but the result is not as strong as with the TPP capacity $\beta$. On the other hand it is easier to deal with subgroups instead of subsets, especially in (brute-force) search algorithms (see \cite{HedtkeMurthy2011} for details). Note that $\beta_\mathrm{g} (G) \geq |G|$, because $(G,1,1)$ is a TPP triple for every group $G$.

\begin{fact}\label{fact:order}
\cite[Lem.~2.1]{Cohn2003}
Without loss of generality we can assume that $|S|\geq |T| \geq |U|$.
\end{fact}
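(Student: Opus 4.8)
\noindent The plan is to establish the stronger statement that the Triple Product Property is preserved under \emph{every} permutation of the three components of a triple $(S,T,U)$. The Fact then follows immediately: given any TPP triple one relabels its components so that the component of largest cardinality comes first, the next-largest second and the smallest third; this relabelling yields another TPP triple and leaves the product $|S|\cdot|T|\cdot|U|$ from the definitions of $\beta$ and $\beta_{\mathrm{g}}$ unchanged.

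To prove permutation-invariance I would first record the closure property $Q(X)=Q(X)^{-1}$, which holds because $(xy^{-1})^{-1}=yx^{-1}\in Q(X)$ whenever $x,y\in X$. Since a single $3$-cycle and a single transposition already generate all permutations of three components, it suffices to treat two cases. \emph{Cyclic shift:} assume $(S,T,U)$ is a TPP triple and let $t\in Q(T)$, $u\in Q(U)$, $s\in Q(S)$ satisfy $tus=1$; then $tu=s^{-1}$, so $stu=ss^{-1}=1$, and the TPP of $(S,T,U)$ forces $s=t=u=1$, so $(T,U,S)$ is again a TPP triple. \emph{Transposition:} assume instead that $u\in Q(U)$, $t\in Q(T)$, $s\in Q(S)$ satisfy $uts=1$; inverting gives $s^{-1}t^{-1}u^{-1}=1$, and $s^{-1}\in Q(S)$, $t^{-1}\in Q(T)$, $u^{-1}\in Q(U)$ by the closure property, so the TPP of $(S,T,U)$ forces $s^{-1}=t^{-1}=u^{-1}=1$, i.e.\ $s=t=u=1$, and $(U,T,S)$ is a TPP triple.

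I do not anticipate any real obstacle: each of the two cases is a one-line computation, and the reduction to these two cases is the standard fact that a transposition and a $3$-cycle generate the symmetric group on three points. The only place where something beyond a bare rearrangement of a product is used is the transposition step, which genuinely relies on $Q(X)=Q(X)^{-1}$; without that, the argument would give invariance only under cyclic shifts, which is too weak to reorder three pairwise distinct cardinalities at will.
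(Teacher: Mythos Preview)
Your argument is correct. The closure $Q(X)=Q(X)^{-1}$, the cyclic-shift step via conjugation, and the transposition step via inversion are all valid, and together they give full $S_3$-invariance of the TPP; the conclusion that one may reorder the components by cardinality without changing $|S|\cdot|T|\cdot|U|$ then follows exactly as you say.

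As for comparison: the paper does not supply its own proof of this Fact at all---it merely cites \cite[Lem.~2.1]{Cohn2003} and moves on. Your write-up is essentially the standard proof of that lemma (permutation invariance via a cyclic shift plus a reversal using $Q(X)^{-1}=Q(X)$), so there is no substantive difference in approach to report.
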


\begin{fact}\label{fact:neumann}
\cite{Neumann2011}
If $(S,T,U)$ is a TPP triple with $|S|\geq |T| \geq |U|$, then $|S|(|T|+|U|-1)\leq |G|$.
\end{fact}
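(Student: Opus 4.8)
The plan is to produce an injective map from $S\times(T\cup U)$ into $G$; this yields at once $|G|\ge|S|\cdot|T\cup U|$, and it remains only to observe that $|T\cup U|=|T|+|U|-1$. To set up, I would first normalise: since $Q(Xg)=Q(X)$ and $|Xg|=|X|$ for every $g\in G$, and the TPP sees $S$, $T$, $U$ only through $Q(S)$, $Q(T)$, $Q(U)$, I may right-translate $S$, $T$, $U$ separately so that $1\in S\cap T\cap U$. From the TPP I then record two things. (a) $Q(T)\cap Q(U)=\{1\}$: if $q\in Q(T)\cap Q(U)$ then $q^{-1}\in Q(U)^{-1}=Q(U)$, and applying the TPP to $1\cdot q\cdot q^{-1}=1$ gives $q=1$; since moreover $1\in T\cap U$ forces $T\subseteq Q(T)$ and $U\subseteq Q(U)$, we get $T\cap U=\{1\}$ and hence $|T\cup U|=|T|+|U|-1$. (b) For $t\in T$, $u\in U$ and $a,b\in S$ one has $t=t\cdot 1^{-1}\in Q(T)$, $u^{-1}=1\cdot u^{-1}\in Q(U)$ and $ab^{-1}\in Q(S)$.

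Next I would define $\nu\colon S\times(T\cup U)\to G$ by $\nu(s,x)=x^{-1}s$ and verify it is injective. Suppose $x_1^{-1}s_1=x_2^{-1}s_2$; inverting both sides and rearranging gives $(s_2 s_1^{-1})\,x_1\,x_2^{-1}=1$. If $x_1,x_2\in T$, this is $q_S q_T q_U=1$ with $q_S=s_2 s_1^{-1}\in Q(S)$, $q_T=x_1 x_2^{-1}\in Q(T)$, $q_U=1$. If $x_1,x_2\in U$, it is $q_S q_T q_U=1$ with $q_S=s_2 s_1^{-1}$, $q_T=1$, $q_U=x_1 x_2^{-1}\in Q(U)$. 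In the mixed case $x_1\in T$, $x_2\in U$, it is $q_S q_T q_U=1$ with $q_S=s_2 s_1^{-1}$, $q_T=x_1\in Q(T)$, $q_U=x_2^{-1}\in Q(U)$, using (b). In each case the TPP forces $q_S=q_T=q_U=1$, so $s_1=s_2$ and $x_1=x_2$ (in the mixed case one even gets $x_1=x_2=1$). Because $T\cap U=\{1\}$, any nontrivial element of $T\cup U$ lies in exactly one of $T$ and $U$, so these three cases exhaust all possibilities; hence $\nu$ is injective, and $|G|\ge|S\times(T\cup U)|=|S|(|T|+|U|-1)$.

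The only delicate point I anticipate is choosing the map so that the collision relation lands in the exact shape $q_S q_T q_U=1$ with each factor in the right quotient of the matching set: since $SS^{-1}$ and $S^{-1}S$ differ in a nonabelian group, $\nu(s,x)=x^{-1}s$ is the right choice, whereas $\nu(s,x)=sx$ would leave an $S^{-1}S$-type factor and the argument would stall. Once $\nu$ is fixed the case analysis is routine. As a sanity check, the bound is attained by $S=\langle(12)\rangle$, $T=\langle(13)\rangle$, $U=\langle(23)\rangle$ in $S_3$, where $|S|(|T|+|U|-1)=6=|G|$.
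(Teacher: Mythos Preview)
The paper does not prove Fact~\ref{fact:neumann}; it simply quotes the result from Neumann's paper \cite{Neumann2011}. So there is no in-paper argument to compare yours against.

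Your proof is correct and is essentially the standard argument: after normalising so that $1\in S\cap T\cap U$, you exhibit an injection $S\times(T\cup U)\hookrightarrow G$ and compute $|T\cup U|$ via $T\cap U=\{1\}$. One small point: in the injectivity verification you treat the mixed case $x_1\in T$, $x_2\in U$ but not the symmetric one $x_1\in U$, $x_2\in T$, and your closing sentence (``these three cases exhaust all possibilities'') does not quite justify the omission. This is harmless---either swap the roles of $(s_1,x_1)$ and $(s_2,x_2)$, or invoke that the TPP is invariant under permuting $S,T,U$ (this is part of Fact~\ref{fact:order})---but it is worth a half-line. Note also that your argument never uses the hypothesis $|S|\ge|T|\ge|U|$; the inequality $|S|(|T|+|U|-1)\le|G|$ holds for any TPP triple, and the ordering merely singles out the sharpest of the three symmetric bounds.
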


\begin{fact}\label{fact:murthy}
\cite[Thm.~3.5]{HedtkeMurthy2011}
If $(S,T,U)$ is a TPP triple of subgroups of $G$ and one of $S$, $T$ or $U$ is normal in $G$, then $|S|\cdot |T|\cdot |U| \leq |G|$.
\end{fact}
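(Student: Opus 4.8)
The plan is to use the observation that $Q(H)=H$ for any subgroup $H\le G$, so that for a triple of subgroups the TPP reads simply: $stu=1$ with $s\in S$, $t\in T$, $u\in U$ implies $s=t=u=1$. Since the TPP is invariant under permuting the three subsets (permutation-invariance of the TPP; cf.\ Fact~\ref{fact:order}) and the claimed inequality is symmetric in $S,T,U$, I may assume the normal subgroup is $U$.

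First I would extract two triviality statements by plugging well-chosen triples into the TPP. For $x\in S\cap U$, the triple $(x,1,x^{-1})$ satisfies $x\cdot 1\cdot x^{-1}=1$, so $x=1$; hence $S\cap U=\{1\}$. Because $U\trianglelefteq G$, the product $SU$ is then a subgroup, of order $|S|\,|U|/|S\cap U|=|S|\,|U|$. Next, for $x\in T\cap SU$, write $x=su$ with $s\in S$, $u\in U$; then $s^{-1}xu^{-1}=1$ with $s^{-1}\in S$, $x\in T$, $u^{-1}\in U$, so the TPP forces $x=1$, i.e.\ $T\cap SU=\{1\}$.

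The conclusion then follows from one more application of the subgroup product formula: $T$ and $SU$ are subgroups of $G$ with $T\cap SU=\{1\}$, so the subset $T\cdot SU\subseteq G$ has exactly $|T|\,|SU|=|S|\,|T|\,|U|$ elements, whence $|S|\,|T|\,|U|\le|G|$.

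I expect the only genuinely load-bearing point to be the place where the normality hypothesis enters: the identities $|SU|=|S|\,|U|$ and $|T\cdot SU|=|T|\,|SU|$ both require $SU$ to be an actual subgroup, which fails for a general product of subsets. (Note that pairwise triviality of $S\cap T$, $S\cap U$, $T\cap U$ alone is not enough — three subgroups of order $2$ in $S_3$ already violate $|S|\,|T|\,|U|\le|G|$ — so the full strength of the TPP must be used to get $T\cap SU=\{1\}$.) An essentially equivalent formulation passes to the quotient $G/U$, where $S$ and $T$ map injectively onto subgroups with trivial intersection, giving $|S|\,|T|\le|G/U|=|G|/|U|$ directly.
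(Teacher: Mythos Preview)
The paper does not prove this statement; it merely records it as Fact~\ref{fact:murthy} with a citation to \cite[Thm.~3.5]{HedtkeMurthy2011}, so there is no in-paper argument to compare against. Your proof is correct and complete: for subgroups $Q(H)=H$, permutation-invariance of the TPP lets you assume $U\trianglelefteq G$, the TPP applied to $(x,1,x^{-1})$ and to $(s^{-1},x,u^{-1})$ forces $S\cap U=\{1\}$ and $T\cap SU=\{1\}$, and then the subgroup product formula yields $|T\cdot SU|=|S|\,|T|\,|U|\le|G|$. The quotient variant you sketch at the end is an equally valid route to the same inequality.
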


\section{New Upper Bound for the TPP subgroup capacity}
\enlargethispage{\baselineskip}

\begin{thm} Let $G$ be a finite group. Let $\{S_i\}_{i=1}^{k}$ be the list of all subgroups of $G$, sorted by their order such that $|S_i|\leq |S_{i+1}|$. Let $N:=\max \{i : |S_i| \leq |G|/(|S_3|+|S_2|-1)\}$. We define
\begin{align}
\Delta(S_i) &:= \max \big\{|S_j|\cdot|S_k| ~ : ~ 1 < k < j < i, ~ |S_i|(|S_j|+|S_k|-1)\leq |G|\big\},\notag
\intertext{and}
b(G) &:= \max_{4 \leq i \leq N} \min\left\{ \frac{|G| \cdot |S_i|}{|\mathrm{Core}_G(S_i)|} ~ , ~ |S_i| \cdot\Delta(S_i) \right\}. \notag
\end{align}
Then \begin{gather}
\beta_\mathrm{g}(G) \leq \max\{b(G), |G|\} =: h(G).\label{eq:NewBound}
\end{gather}
\end{thm}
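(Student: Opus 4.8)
The plan is to show $|S|\cdot|T|\cdot|U|\le h(G)$ for an arbitrary TPP triple $(S,T,U)$ of subgroups of $G$ and then take the maximum over all such triples, which is exactly \eqref{eq:NewBound}. By Fact~\ref{fact:order} I may assume $|S|\ge|T|\ge|U|$, and I would then distinguish the cases $|U|=1$ and $|U|\ge2$.

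In the case $|U|=1$ one has $Q(U)=\{1\}$, so for any $1\ne x\in S\cap T$ the data $x\in Q(S)$, $x^{-1}\in Q(T)$, $1\in Q(U)$ would give $x\cdot x^{-1}\cdot1=1$, contradicting the TPP; hence $S\cap T=\{1\}$, and the subgroup product formula yields $|S|\cdot|T|=|ST|\le|G|$, so $|S|\cdot|T|\cdot|U|\le|G|\le h(G)$.

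Now suppose $|U|\ge2$, so $S,T,U$ are all nontrivial; moreover they are pairwise distinct, since e.g.\ $S=T$ together with a choice $1\ne s\in S$, $t=s^{-1}\in T$, $u=1$ would contradict the TPP, and the other two cases are analogous via the cyclic symmetry of the TPP. Since the TPP is invariant under all permutations of the triple (this underlies Fact~\ref{fact:order}), I may reorder $(S,T,U)$ so that, writing $S=S_a$, $T=S_b$, $U=S_c$, the list indices satisfy $a>b>c$; because $\{S_i\}$ is sorted by order this is compatible with $|S|\ge|T|\ge|U|$. As $U$ is nontrivial, $c\ge2$, so $a>b>c\ge2$ forces $b\ge3$ and $a\ge4$. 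Next, $S_b$ and $S_c$ being distinct nontrivial subgroups with $b\ge3$, $c\ge2$ gives $|T|+|U|=|S_b|+|S_c|\ge|S_3|+|S_2|$; feeding this into Fact~\ref{fact:neumann}, which states $|S|(|T|+|U|-1)\le|G|$, I get $|S_a|\le|G|/(|S_3|+|S_2|-1)$, hence $a\le N$. So $4\le a\le N$, and $S_a$ is an admissible index in the maximum defining $b(G)$.

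It then remains to prove $|T|\cdot|U|\le\min\{\,|G|/|\mathrm{Core}_G(S_a)|\,,\ \Delta(S_a)\,\}$, since multiplying by $|S_a|$ gives $|S|\cdot|T|\cdot|U|\le b(G)\le h(G)$. For the first term I would set $C:=\mathrm{Core}_G(S_a)\le S_a=S$; as $Q(C)\subseteq Q(S)$ the triple $(C,T,U)$ still satisfies the TPP, and $C$ is normal in $G$, so Fact~\ref{fact:murthy} gives $|C|\cdot|T|\cdot|U|\le|G|$, i.e.\ $|T|\cdot|U|\le|G|/|\mathrm{Core}_G(S_a)|$. For the second term, the pair $(j,k)=(b,c)$ satisfies $1<k<j<a$ and, by Fact~\ref{fact:neumann}, $|S_a|(|S_b|+|S_c|-1)\le|G|$, so $|S_b|\cdot|S_c|$ is one of the products in the maximum defining $\Delta(S_a)$, whence $|T|\cdot|U|=|S_b|\cdot|S_c|\le\Delta(S_a)$. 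The step I expect to be the main obstacle is the index bookkeeping in the third paragraph: justifying that the triple may be reordered by list index, and verifying the chain $4\le a\le N$, which rests on the slightly delicate point that two \emph{distinct} nontrivial subgroups already have orders summing to at least $|S_2|+|S_3|$. Once $a$ is located in $[4,N]$, the two required bounds on $|T|\cdot|U|$ come straight out of Facts~\ref{fact:murthy} and~\ref{fact:neumann}. (The degenerate situations in which $G$ has at most three subgroups, so $N<4$ and $b(G)$ is a maximum over the empty set, are trivial, since there $\beta_\mathrm{g}(G)=|G|$.)
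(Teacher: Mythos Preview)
Your proof is correct and follows essentially the same strategy as the paper's: restrict via Fact~\ref{fact:order} to ordered triples, locate the index $a$ in $[4,N]$ using Fact~\ref{fact:neumann}, and then bound $|T|\cdot|U|$ by the two quantities in the $\min$ via Fact~\ref{fact:neumann} and Fact~\ref{fact:murthy} applied to $\mathrm{Core}_G(S_a)$. Your handling of the case $|U|=1$ is in fact more careful than the paper's, which simply remarks that the triple $(G,1,1)$ was omitted and then appends the $\max\{\cdot,|G|\}$.
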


\begin{proof}
According to Fact~\ref{fact:order} we are only interested in triples of type $(S_i,S_j,S_k)$ where $i \geq j \geq k$. We assume that $|S_k|>1$, because a TPP triple $(S_i,S_j,S_k)$ represents a $(|S_i|\times |S_j|) \times (|S_j| \times |S_k|)$ matrix multiplication and we only focus on true matrix-matrix products. Furthermore $i\neq j \neq k$ holds, because in every other case the triple $(S_i,S_j,S_k)$ can not fulfill the TPP. Therefore it follows that $i\geq 4$.
Now assume that $(S_i,S_j,S_k)$ is a TPP triple in $G$. From Fact~\ref{fact:neumann} we know that $|S_i|(|S_j|+|S_k|-1)\leq |G|$ must hold. In the case where $S_j$ and $S_k$ are the smallest nontrivial distinct subgroups of $G$, what means that $j=3$ and $k=2$, this gives us the upper bound
\[
|S_i| \leq \frac{|G|}{|S_3|+|S_2|-1}
\]
for $S_i$. It follows that we can restrict the search space for $S_i$ to $\{S_i:  i \leq N\}$. Combined we get $4\leq i \leq N$.
From \textsc{Neumann} (Fact~\ref{fact:neumann}) we know the upper bound
\begin{gather*}
t(G) := \max \big\{|S_i|\cdot |S_j| \cdot |S_k|  ~~ : ~~  S_i,S_j,S_k < G, ~ |S_i| \geq |S_j| \geq |S_k| > 1, ~ |S_i|(|S_j|+|S_k|-1)\leq |G|\big\}
\end{gather*}
for $\beta_\mathrm{g}$. Note that this equals to
$
\max_{i} |S_i|\cdot \Delta(S_i)
$, the right-hand-side of $b(G)$.
Assume that $(S_i,S_j,S_k)$ is a TPP triple of subgroups of $G$. For every subset $A\subseteq S_i$ of $S_i$, $(A,S_j,S_k)$ is a TPP triple, too. If $S_i$ contains a normal subgroup $N\lhd G$ of $G$, then $(N,S_j,S_k)$ is a TPP triple of $G$ which fulfills the Fact~\ref{fact:murthy}. It follows that $|S_j|\cdot |S_k|\leq |G|/|N|$. Obviously, this holds for the biggest normal subgroup in $S_i$, too:
\[
|S_i|\cdot |S_j| \cdot |S_k| \leq |S_i| \cdot \frac{|G|}{|\mathrm{Core}_G(S_i)|}.
\]
Note that this is the left-hand-side of $b(G)$.
We omitted the case $(G,1,1)$, so it could be possible that $b(G) < |G|$. We correct this via Eq.~\eqref{eq:NewBound}.
\end{proof}

\section{Applications}

\noindent Our new bound $h$ is a combination of \textsc{Neumanns}'s bound $t$ (which is the formerly best known bound) and the observation about normal subgroups from \textsc{Hedtke} and \textsc{Murthy}. Obviously $\beta_g \leq h \leq t$ holds.
Note, that Eq.~\eqref{omega} leads to a nontrivial upper bound iff $\beta(G)>D_3(G)$.
Therefore we conclude that a group $G$ with $\beta_\mathrm{g}(G) \leq D_3(G)$ will never realize a nontrivial $\omega$ via a TPP triple of subgroups.
Tbl.~\ref{fig} shows the effect of $h$ and $t$ at excluding such $G$'s.

\begin{table}[h!]
\begin{tabular}{|l|r|rr|}
\hline
$|G|$ & \#$G$'s & $|\{G: t\leq D_3\}|$ & $|\{G: h\leq D_3\}|$\\
\hline
24& 12& 4& 6\\
32& 44& 7& 11\\
36& 10& 4& 6\\
40& 11& 3& 11\\
48& 47& 18& 22\\
50& 3& 1& 2\\
56& 10& 2& 4\\
60& 11& 5& 8\\
\hline
\end{tabular}
\begin{tabular}{|l|r|rr|}
\hline
$|G|$ & \#$G$'s & $|\{G: t\leq D_3\}|$ & $|\{G: h\leq D_3\}|$\\
\hline
64& 256& 129& 136\\
72& 44& 8& 12\\
80& 47& 18& 22\\
84& 13& 5& 8\\
88& 9& 0& 2\\
96& 224& 28& 93\\
98& 3& 1& 2\\
100& 12& 6& 8\\
\hline
\end{tabular}
\caption{Examples of the impact of the new bound for \emph{nonabelian} groups.}
\label{fig}
\end{table}

\providecommand{\bysame}{\leavevmode\hbox to3em{\hrulefill}\thinspace}
\providecommand{\MR}{\relax\ifhmode\unskip\space\fi MR }
\providecommand{\MRhref}[2]{%
  \href{http://www.ams.org/mathscinet-getitem?mr=#1}{#2}
}
\providecommand{\href}[2]{#2}

\end{document}